\newcommand{\E}[0]{\mathbb{E}}
\newtheorem{thm}{Theorem}[section]
\newtheorem{prop}[thm]{Proposition}
\newtheorem{conj}[thm]{Conjecture}
\newtheorem{question}[thm]{Question}
\newcommand{\beq}[1]{\begin{equation}\label{#1}}
\newcommand{\enq}[0]{\end{equation}}
\newcommand{\bn}[0]{\bigskip\noindent}
\newcommand{\mn}[0]{\medskip\noindent}
\newcommand{\nin}[0]{\noindent}
\newcommand{\sub}[0]{\subseteq}
\newcommand{\sm}[0]{\setminus}
\renewcommand{\dots}[0]{,\ldots,}
\newcommand{\ov}[0]{\overline}
\newcommand{\A}[0]{{\cal A}}
\newcommand{\B}[0]{{\cal B}}
\newcommand{\cee}[0]{{\cal C}}
\newcommand{\g}[0]{{\cal G}}
\newcommand{\h}[0]{{\cal H}}
\newcommand{\K}[0]{{\cal K}}
\newcommand{\m}[0]{{\cal M}}
\newcommand{\pee}[0]{{\cal P}}
\newcommand{\Q}[0]{{\cal Q}}
\newcommand{\R}[0]{{\cal R}}
\newcommand{\sss}[0]{{\cal S}}
\newcommand{\T}[0]{{\cal T}}
\newcommand{\ra}[0]{\rightarrow}
\newcommand{\mm}[0]{m}
\renewcommand{\int}[0]{{\rm int}}
\newcommand{\aaa}[0]{a}
\newcommand{\llll}[0]{l}
\newcommand{\RRR}[0]{C}
\newcommand{\0}[0]{\emptyset}
\newcommand{\C}[2]{{{#1}\choose{{#2}}}}
\newcommand{\Cc}[0]{\tbinom}
\newcommand{\ga}[0]{\alpha }
\newcommand{\gb}[0]{\beta }
\newcommand{\gc}[0]{\gamma }
\newcommand{\gd}[0]{\delta }
\newcommand{\gD}[0]{\Delta }
\newcommand{\gG}[0]{\Gamma }
\newcommand{\gl}[0]{\lambda }
\newcommand{\go}[0]{\omega}
\newcommand{\gO}[0]{\Omega}
\newcommand{\gz}[0]{\zeta}
\newcommand{\eps}[0]{\varepsilon }
\newcommand{\vt}[0]{\vartheta}
\newcommand{\1}[0]{{\bf 1}}
\newcommand{\comments}[1]{}
\newcommand{\pr}[1][]{\mathbb{P}}
\title{Disproof of a 
packing conjecture of \\ Alon and Spencer}
\author{
H\"{u}seyin Acan
\footnote{Department of Mathematics, Rutgers University}
\thanks{Supported by National Science Foundation Fellowship (Award No.~1502650).}
\\
{\small \texttt{huseyin.acan@rutgers.edu}}
\and
Jeff Kahn \footnotemark[1] \thanks{Supported by NSF grant DMS1501962.}\\
{\small \texttt{jkahn@math.rutgers.edu}}
}
\date{}
\begin{document}
\renewcommand{\thefootnote}{\fnsymbol{footnote}}
\footnotetext{AMS 2010 subject classification:  05D40, 05C80, 05C70, 60C05}
\footnotetext{Key words and phrases:  Random graph, edge-disjoint cliques, hypergraph matchings}
\maketitle

\begin{abstract}
A 1992 conjecture of Alon and Spencer says, roughly, that the ordinary random graph
$G_{n,1/2}$ typically admits a covering of a constant fraction of its edges by edge-disjoint,
nearly maximum cliques.  We show that this is not the case.  The disproof is based on
some (partial) understanding of a more basic question:
for $k\ll \sqrt{n}$ and $A_1\dots A_t$ chosen uniformly and independently from the $k$-subsets of
$\{1\dots n\}$, what can one say about
\[
\pr(|A_i\cap A_j|\leq 1 ~\forall i\neq j)?
\]
Our main concern is trying to understand how closely the answers to this and a related question about
matchings
follow heuristics gotten by pretending that certain (dependent) choices are made independently.
\end{abstract}

\section{Introduction}\label{Intro}

Write $G$ for the the random graph $G_{n,1/2}$ and
$f(k)$ ($=f_n(k)$) for the expected number of
$k$-cliques in $G$; that is,
$f(k)=\C{n}{k}2^{-\C{k}{2}}$.
Set
\[
k_0 = k_0(n) = \min \{k:f(k)<1\}
\]
and temporarily (through Conjecture~\ref{ASConj})
set $k=k(n) =k_0-4$.
It is easy to see that $k \sim 2\log_2n$ and that $f(k)$ is at least about
$n^3$ (precisely, $f(k)=\tilde{\gO}(n^3)$, where, as usual, $\tilde{\gO}$ ignores log factors).

We will call a collection of edge-disjoint cliques a \emph{packing}
(and a \emph{t-packing} if it has size $t$).
Write $\nu_k(G)$ for the maximum size of a packing of
$k$-cliques in $G$.  This quantity (with independent sets in place of cliques)
plays a central role in Bollob\'as' celebrated work
\cite{Bollobas} on the chromatic number of $G$, though all he needs from
$\E \nu_k(G)$---the quantity that will interest us here---is the easy
\beq{easylb}
\E \nu_k(G) =\gO(n^2/k^4).
\enq
(His key point is that $\nu_k$ is Lipschitz, so
martingale concentration implies it is (\emph{very}) unlikely to
be significantly smaller than its expectation.)

Of course one always has $\nu_k(G)\leq \C{n}{2}/\C{k}{2}$.
A conjecture of Alon and Spencer, from
the original 1992 edition of \cite{AS} (and subsequent editions),
says that this trivial bound gives the true order of magnitude of
$\E \nu_k(G)$, \emph{viz.}
\begin{conj}\label{ASConj}
$~~~\E \nu_k(G)=\gO(n^2/k^2).$
\end{conj}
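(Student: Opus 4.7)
The natural strategy is to view clique-packings as matchings in an auxiliary hypergraph and invoke a semi-random (``nibble'') existence theorem. Let $\mathcal{H}=\mathcal{H}(G)$ be the hypergraph on vertex set $E(G)$ whose hyperedges are the edge-sets of the $k$-cliques of $G$; each hyperedge has size $\C{k}{2}$, and $\nu_k(G)$ equals the matching number $\nu(\mathcal{H})$. Since $|V(\mathcal{H})|=|E(G)|=(1+o(1))\C{n}{2}/2$, a matching in $\mathcal{H}$ covering a constant fraction of $V(\mathcal{H})$ already yields the conjectured packing of size $\gO(n^2/k^2)$.

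The plan is to apply the Pippenger--Spencer theorem, which requires showing, typically over the randomness in $G$, that $\mathcal{H}$ is approximately $D$-regular with pairwise codegrees $o(D)$. Step 1 is a mean computation: conditional on $e\in E(G)$, the expected number of $k$-cliques through $e$ is $D=\C{n-2}{k-2}2^{-\C{k}{2}+1}$, and since $f(k)=\tilde{\gO}(n^3)$ one has $D=\tilde{\gO}(nk^2)$. Step 2 is a codegree bound: for two edges $e_1,e_2$ sharing a vertex (resp.\ disjoint), the expected number of $k$-cliques containing both is $\C{n-3}{k-3}2^{-\C{k}{2}+3}$ (resp.\ $\C{n-4}{k-4}2^{-\C{k}{2}+6}$), each smaller than $D$ by a factor of $\Theta(k/n)=o(1)$. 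Step 3 is concentration: show that, w.h.p.\ in $G$, all but an $o(1)$ fraction of edges $e$ satisfy $\deg_{\mathcal{H}}(e)=(1\pm o(1))D$. After discarding the bad edges and their incident hyperedges, Pippenger--Spencer applied to the remaining ``core'' of $\mathcal{H}$ then delivers a matching of size $(1-o(1))|V(\mathcal{H})|/\C{k}{2}=\gO(n^2/k^2)$, completing the proof.

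The main obstacle is Step 3. The number of $k$-cliques through a fixed edge $e$ is a sum of strongly correlated indicators --- any two such cliques can share up to $k-2$ additional vertices --- and its variance is dominated by pair contributions with large overlap rather than by the diagonal. For $k$ near $k_0$ the resulting second moment is already comparable to $D^2$, so neither Chebyshev nor Janson's inequality delivers the $(1\pm o(1))$ multiplicative concentration needed, and even Kim--Vu--type polynomial concentration is not obviously sharp enough in the lower tail. Pinning down the distribution of the number of $k$-clique extensions of a single edge --- and in particular whether enough edges lie near its mean, rather than the mean being inflated by a small set of atypically rich edges --- is the crux of the matter and, heuristic aside, the place where any proof of the Alon--Spencer conjecture must really commit.
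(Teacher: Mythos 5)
The statement you are trying to prove is false, and this paper's main purpose is to disprove it: Theorem~\ref{ASWrong} shows that for $k=k_0-\RRR$ one has $\pr(\nu_k(G)>Dn^2/k^3)<\exp[-n^2/k^2]$ for a suitable constant $D$, whence $\E\nu_k(G)=O(n^2/k^3)=o(n^2/k^2)$. So no completion of your argument can exist, and the place where it breaks is exactly the step you flag as the crux, but the failure is not merely technical. Your Steps 1--2 only control \emph{expected} degrees and codegrees of the auxiliary hypergraph $\h$; for $k$ within a constant of $k_0$ the number of $k$-cliques through a fixed edge is wildly non-concentrated (its second moment is dominated by pairs of cliques with large overlap, as you note), and the Pippenger--Spencer/nibble machinery --- which in any case is delicate here because the uniformity $\C{k}{2}=\Theta(\log^2 n)$ grows with $n$ --- cannot be fed by expectation alone. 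More importantly, the obstruction to a large packing is global rather than local: even though a typical edge lies in roughly $n^{C-3}$ cliques (the paper notes precisely this as the misleading heuristic suggesting plausibility of the conjecture), the cliques overlap so heavily that \emph{no} family of $Dn^2/k^3$ edge-disjoint $k$-cliques exists at all w.h.p., a fact invisible to degree/codegree statistics.

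What the paper does instead is a first-moment argument over potential packings: the expected number of $t$-packings of $k$-cliques in $G$ is at most $\gz\Cc{n}{k}^t 2^{-\C{k}{2}t}$, where $\gz$ is the probability that $t$ independent uniform $k$-sets are pairwise nearly-disjoint, and Theorem~\ref{TMP2}(a) shows $\gz<\exp[-\gb(\log D)tk]$ for $t=Dn^2/k^3$ --- much smaller than the naive ``independent pairs'' heuristic would suggest in this range of $t$. That bound is proved by a two-round exposure (choosing half of each $k$-set first) together with a matching estimate for nearly-disjoint hypergraphs (Theorem~\ref{TMP}). Choosing $D$ with $\gb\log D>C$ beats the $\exp[Ct\log n]$ factor from \eqref{fk} and kills all $t$-packings. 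If you want a true positive result in the direction you were aiming, the paper's Proposition~\ref{Plb} gives $\E\nu_k(G)=\gO((n^2/k^4)\log k)$ via the triangle-free independence bound of Ajtai--Koml\'os--Szemer\'edi, and the truth is conjecturally between that and $O(n^2/k^3)$; closing that gap is open.
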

\nin
In other words, one can (in expectation) cover a constant fraction of the pairs
from $[n]$ by edge-disjoint $k$-cliques of $G$.
Here we show that this is not correct, even for
somewhat smaller $k$:
\begin{thm}\label{ASWrong}
For each $\RRR$ there is a D so that if
$k=k_0-\RRR$, then
\[
\pr(\nu_k(G) > Dn^2/k^3)< \exp[-n^2/k^2].
\]
\end{thm}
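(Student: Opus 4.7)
The plan is a first-moment computation. Any outcome with $\nu_k(G)\geq t$ contains at least one $t$-packing, so
\[
\pr(\nu_k(G)\geq t)\ \leq\ \E[\,\#\{t\text{-packings in }G\}\,].
\]
An arbitrary $t$-packing of $k$-subsets of $[n]$ consists of $t\C{k}{2}$ distinct pairs and so lies inside $G_{n,1/2}$ with probability $2^{-t\C{k}{2}}$. Writing
\[
q_t \;=\; \pr\bigl(\,|A_i\cap A_j|\leq 1\;\;\forall\, i\neq j\,\bigr)
\]
for the question highlighted in the abstract (with $A_1,\dots,A_t$ independent uniform $k$-subsets of $[n]$), the number of unordered $t$-packings in $K_n$ equals $\C{n}{k}^t q_t/t!$, whence
\[
\E[\,\#\{t\text{-packings in }G\}\,]\;=\;\frac{f(k)^t\,q_t}{t!}.
\]

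Now the parameter bookkeeping. With $k=k_0-\RRR$, using $f(k-1)/f(k)\sim k\cdot 2^{k-1}/n$ and $k\sim 2\log_2 n$, one finds $\log f(k)=\Theta(\RRR k)$. Taking $t=Dn^2/k^3$ and applying Stirling gives $\log t!\sim t\log t=\Theta(Dn^2/k^2)$ and $t\log f(k)=\Theta(\RRR D\,n^2/k^2)$. So the target $\exp[-n^2/k^2]$ will follow as soon as we can establish the matching-heuristic upper bound
\[
\log q_t \;\leq\; -c\,t^2k^4/n^2 \;=\; -cD^2\,n^2/k^2
\]
for some absolute $c>0$: the combined exponent becomes $(A\RRR D-cD^2)\,n^2/k^2$ for an absolute $A$, which is $\leq -n^2/k^2$ once $D$ is chosen large enough in terms of $\RRR$.

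Thus the proof reduces to a good upper bound on $q_t$, which is also where the difficulty concentrates --- it is essentially the main question the paper is built around. The ``independent-pairs'' heuristic treats the $\C{t}{2}$ bad events $|A_i\cap A_j|\geq 2$ (each of probability $\sim k^4/n^2$) as independent and predicts $q_t\asymp\exp(-\C{t}{2}k^4/n^2)$, so what is needed is (at least) the upper-bound half of this prediction. I would attempt a telescoping argument: sample $A_1,\dots,A_t$ one at a time and bound the conditional probability
\[
\pr\bigl(A_i\text{ is good}\mid A_1,\dots,A_{i-1}\text{ pairwise good}\bigr)
\]
at each step. For small $i$ (where $(i{-}1)k^4/n^2\ll 1$) a Bonferroni/second-moment argument gives a conditional failure probability $\Omega((i-1)k^4/n^2)$, and summing logarithms across $i$ yields the $\exp(-ct^2k^4/n^2)$ bound --- provided one can also push through the bulk range $i\gtrsim n^2/k^4$. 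The subtlety --- and the crux of the whole question --- is precisely that conditioning on the history being pairwise good biases the $A_j$'s away from uniform, and one must show this bias does not significantly depress the per-step failure probability; a blanket single application of Suen/Janson to all $\C{t}{2}$ bad events only yields $\exp(-\Omega(t))$, a factor of $k$ short of what is needed.
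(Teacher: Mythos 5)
Your first-moment reduction is exactly the paper's: bound $\pr(\nu_k(G)\geq t)$ by the expected number of $t$-packings, which factors as (number of $t$-packings in $\K$) $\times\, 2^{-t\binom{k}{2}}$, and the first factor is $\gz\,\binom{n}{k}^t$ up to the harmless $t!$. Your bookkeeping $\log f(k)=\Theta(\RRR k)$, $\log t!=\Theta(tk)$, $t\log f(k)=\Theta(\RRR tk)$ is also right (the paper writes this as \eqref{Ebd} and \eqref{fk}). So far so good.

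The problem is where you have located ``the crux.'' You reduce the theorem to the ideal heuristic bound $\log q_t\leq -c\,t^2k^4/n^2 = -cD^2 n^2/k^2$ at $t=Dn^2/k^3$, and then \emph{explicitly acknowledge that you cannot prove this}: you flag the bias from conditioning on the history being pairwise good as the unresolved difficulty, and you correctly observe that a blanket Janson/Suen bound is a factor of $k$ short. That is a genuine gap, not a detail. In fact the paper does \emph{not} prove the ideal bound in this range either; Theorem~\ref{TMP2}(b), which carries out essentially the telescoping-plus-Janson plan you sketch, is stated and proved only for $t\ll n^2/k^3$. For the relevant $t=Dn^2/k^3$ the paper proves the strictly weaker Theorem~\ref{TMP2}(a), namely $\gz<\exp[-\gb(\log D)\,tk]=\exp[-\gb D\log D\cdot n^2/k^2]$, and this suffices for Theorem~\ref{ASWrong} because one only needs the exponent to grow superlinearly in $D$ (so that it eventually beats the $\Theta(\RRR D\,n^2/k^2)$ from $t\log f(k)$). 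You were fixated on $D^2$ when $D\log D$ would do, and more importantly you have no proof of either.

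The actual mechanism the paper uses for Theorem~\ref{TMP2}(a) is qualitatively different from your one-round sequential exposure. Each $A_i$ is split into two halves $B_i\cup C_i$; conditioning on $\{B_i\}$ being a packing, a \emph{necessary} condition for $\{A_i\}$ to be a packing is that for every vertex $x$, $\{B_i: x\in C_i\}$ is a matching in the fixed nearly-disjoint hypergraph $\h=\{B_1,\dots,B_t\}$. This converts the intractable ``global conditioning bias'' you worried about into $n$ matching-probability bounds in an \emph{adversarial} nearly-disjoint $l$-graph (Theorem~\ref{TMP}), proved by a Cauchy--Schwarz degree-counting lemma (Proposition~\ref{CSProp}) plus a Chernoff-type argument. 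None of that machinery is present in, or even suggested by, your proposal. So while your reduction is correct, the part that makes the theorem true is missing.
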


Again, it is easy to see that for $k$ as in Theorem~\ref{ASWrong} we have
\beq{fk}
\tilde{\gO}(n^{C-1}) < f(k) < n^C,
\enq
and that edges of $G$ typically lie in many (at least about $n^{C-3}$) $k$-cliques,
which might suggest plausibility of Conjecture~\ref{ASConj}.
But as we will see below (following Theorem~\ref{TMP2}),
falsity of the conjecture should not be surprising,
though establishing this intuition so far seems less straightforward than one might expect.

\mn

We also observe a slight improvement in the lower bound of \eqref{easylb},
an easy consequence of a seminal result of Ajtai, Koml\'os and Szemer\'edi \cite{aks,sidon}:
\begin{prop}\label{Plb}
$\E \nu_k(G)=\gO((n^2 /k^4)\log k)$.
\end{prop}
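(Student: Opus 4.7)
The plan is to realize $\nu_k(G)$ as the independence number of a conflict graph and invoke an Ajtai--Koml\'os--Szemer\'edi type lower bound on $\alpha$ to extract a $\log k$ factor beyond the trivial estimate behind~\eqref{easylb}.

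Let $\Gamma=\Gamma(G)$ be the graph whose vertex set $\K$ is the family of $k$-cliques of $G$, with $K\sim K'$ iff $|V(K)\cap V(K')|\ge 2$ (equivalently, $K$ and $K'$ share at least one edge). Any independent set of $\Gamma$ is a packing of $k$-cliques, so $\nu_k(G)\ge\alpha(\Gamma)$. Standard moment computations in this clique regime give $\E|V(\Gamma)|=f(k)$ and
\[
\E[\deg_\Gamma(K)\mid K\in\K]=\sum_{s=2}^{k-1}\binom{k}{s}\binom{n-k}{k-s}2^{\binom{s}{2}-\binom{k}{2}}=\Theta\!\left(k^4 f(k)/n^2\right)=:D,
\]
the sum being dominated by the $s=2$ term. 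Concentration (second moment for $|V(\Gamma)|$, Markov or an edge-exposure martingale for a typical-degree event) yields with probability $\Omega(1)$ the bounds $|V(\Gamma)|\sim f(k)$ and $\Delta(\Gamma)\lesssim D$, so the trivial $\alpha(\Gamma)\ge |V(\Gamma)|/(\Delta(\Gamma)+1)$ reproduces~\eqref{easylb} in expectation.

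For the $\log k$ improvement, one verifies the AKS/Shearer-style local-sparsity hypothesis (see \cite{aks,sidon}): the number of edges in the neighborhood of a typical $K\in\K$ is at most $D^2/k^{c}$ for some fixed $c>0$. Each such edge is a triple $(K,K_1,K_2)$ of pairwise edge-sharing $k$-cliques, and one estimates the expected number of these by splitting on the overlap sizes $|K\cap K_1|,\,|K\cap K_2|,\,|K_1\cap K_2|$ (all $\ge 2$), checking that even the dominant ``all twos'' pattern loses a factor of order $k^{\Omega(1)}$ against the naive $\binom{\Delta}{2}$. The AKS-type bound then upgrades $\alpha(\Gamma)\ge\Omega(|V(\Gamma)|/\Delta)$ to $\alpha(\Gamma)\ge\Omega((|V(\Gamma)|/\Delta)\log k)$, and Bollob\'as' Lipschitz-type concentration for $\nu_k$ (as invoked for \eqref{easylb}) passes this to $\E\nu_k(G)=\Omega((n^2/k^4)\log k)$.

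The step I expect to require the most care is the local-sparsity calculation of the third paragraph: while each individual overlap pattern is routine to estimate, aggregating them cleanly and uniformly to identify the $k^{\Omega(1)}$ margin over the naive $D^2$ neighborhood-edge bound is the only point in the argument where the easy bound is genuinely improved.
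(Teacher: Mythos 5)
Your outline matches the paper's skeleton: build the conflict graph $\Gamma$ on the $k$-cliques of $G$, control its density with constant probability, and then squeeze a $\log k$ factor out of an Ajtai--Koml\'os--Szemer\'edi-type bound. But there are two real gaps. First, the assertion that $\Delta(\Gamma)\lesssim D$ holds with probability $\Omega(1)$ is not established and should not be expected: the degree sequence of $\Gamma$ is heavy-tailed (a $k$-clique lying inside an unusually dense part of $G$ shares edges with far more than $D$ other cliques), so the \emph{maximum} degree will typically be much larger than the mean. The paper never makes a uniform degree claim; it bounds $|E(\Gamma)|$ and the triangle count $|T(\Gamma)|$ by Markov from first-moment estimates, which is all it needs.

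Second, and more centrally, Theorem~\ref{TAKS} as cited from \cite{aks,sidon} is a statement about \emph{triangle-free} graphs; the Shearer-type ``neighborhoods contain few edges'' formulation you invoke is a later, genuinely different result and is not what those references give. And $\Gamma$ is very far from triangle-free: $|T(\Gamma)|$ is of order $k^6M^3/n^4$, vastly exceeding $|V(\Gamma)|\sim M$, so naively deleting a vertex per triangle destroys the whole graph. The ingredient your sketch is missing is the paper's subsampling step: pass to the induced subgraph $H$ on a uniform random vertex set $W$ of size $\delta M$ with $\delta=n^2/(2k^3M)$, so that $|V(H)|$ drops linearly while $\E|E(H)|$ and $\E|T(H)|$ drop quadratically and cubically; for a good choice of $W$ one gets $|E(H)|\le n^2/k^2$ and $|T(H)|\le n^2/(3k^3)$, and \emph{then} deleting one vertex per triangle leaves a triangle-free $K$ on $\Omega(n^2/k^3)$ vertices of average degree $O(k)$, to which the actual triangle-free Theorem~\ref{TAKS} applies directly. (One further small point: the appeal to Lipschitz concentration for $\nu_k$ is unnecessary --- once the density bounds hold with probability $\Omega(1)$ and the claim is deterministic given them, the bound on $\E\nu_k(G)$ is immediate.) You flag the local-sparsity calculation as the hard step, but the real hard step is realizing that you should subsample before you try to apply the AKS bound.
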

\nin
Though it may look like a detail at this point, determining the true order of magnitude of
$\E \nu_k(G)$ still seems to us quite interesting, since it seems to require
understanding more basic issues.
For a guess, we slightly prefer the upper bound, but there are heuristics on both sides.
It is not too hard to see that for a suitable $c$ the \emph{expected} number of
$(cn^2/k^3$)-packings of $k$-cliques is large.

\mn

As above, a collection of sets is a \emph{packing}---or is
\emph{nearly-disjoint}; we will find it convenient to have both terms---if no
two of its members have more than one point in common,
and a $t$-\emph{packing} is a packing of size $t$.
As usual a \emph{matching} is a collection of pairwise \emph{disjoint} sets
and an $m$-matching is a matching of size $m$.
Given $n$ and $k$ (for most of our discussion $k$ need not be
as above), we write $\K$ for $\C{[n]}{k}$.

\mn

Our real interest in this paper is in the validity of
heuristics based on the idea that certain events are close to independent.
We view the next question in this way and will see a second instance in the discussion
around Theorem~\ref{TMP}.

\begin{question}\label{Q}
For
$A_1\dots A_t$ drawn uniformly and independently
from $\K$, what can be said about
\beq{gz}
\gz=\gz(n,k,t):=\pr(\mbox{$A_1\dots A_t$ form a packing})?
\enq
\end{question}
\nin
Of course what we expect here will depend on the parameters.
We assume throughout that
\beq{ksmall}
1\ll k\ll \sqrt{n}.
\enq
As noted below, the case of fixed $k$ is handled in \cite{LL,Keevash}
(with slight changes to our ``natural" answers,
e.g.\ since $\C{k}{2}\not\sim k^2/2$ when $k$ is fixed).
The upper bound in \eqref{ksmall} makes $\pr(|A_i\cap A_j|\geq 2)$
small, without which the problem seems less natural.
(We actually tend to think of $k=\Theta(\log n)$,
the relevant range for Theorem~\ref{ASWrong}.)

\mn

For $k$ as in \eqref{ksmall} and $A,B$ drawn uniformly and independently from $\K$,
\[
\pr ((|A\cap B|\geq 2)\approx k^4/(2n^2);
\]
so thinking of the events
$\{|A_i\cap A_j|\geq 2\}$ as close to independent suggests
\beq{ideal1}
\gz ~\approx ~\left(1-k^4/(2n^2)\right)^{\C{t}{2}}
~\approx~
\exp\left[-\tfrac{t^2k^4}{4n^2}\right].
\enq
Another, more robust way to arrive at the same guess:  the probability that $m:=t\C{k}{2}$ pairs chosen
\emph{independently} (and uniformly) from $\C{[n]}{2}$ are distinct is
\beq{ideal2}
\mbox{$\prod_{i=1}^{m-1}(1-i/\C{n}{2}) $,}
\enq
which agrees (approximately) with the r.h.s.\ of \eqref{ideal1}, provided
\beq{tsmall}
t\ll n^2/k^2.
\enq
It seems not impossible that these heuristics are close to the truth;
precisely, that for $t$ as in \eqref{tsmall},
\beq{ideal3}
\mbox{$\log(1/\gz) \sim t^2k^4/(4n^2)$}
\enq
(when the distinction matters, we use $\log$ for $\ln$),
while for larger $t$ (where
\eqref{ideal1} and \eqref{ideal2} are not so close)
the asymptotics of
$\log(1/\gz)$ are given by \eqref{ideal2}.

\mn

Here we give upper bounds on $\gz$ that
(i) for $t$ relevant to Theorem~\ref{ASWrong}
support the theorem but
fall somewhat short of \eqref{ideal3}, and (ii) agree
with \eqref{ideal3} for slightly smaller $t$.
We will not have anything to say about lower bounds.

\begin{thm}\label{TMP2}
{\rm (a)}
There is a fixed $\gb>0$ such that if $t = Dn^2/k^3 $, then
\beq{zetabd}
\gz < \left\{\begin{array}{ll}
\exp[-\gb D tk]&\mbox{if $e\geq D =\gO(1)$,}\\
\exp[-\gb (\log D)tk]&\mbox{if $D>e$.}
\end{array}\right.
\enq
{\rm (b)}
If $1\ll t\ll n^2/k^3~$ then $~\gz < \exp[-(1-o(1))t^2k^4/(4n^2)]$.
\end{thm}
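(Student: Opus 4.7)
The plan is to expose $A_1,\dots,A_t$ one at a time and write $\gz = \prod_{i=2}^{t}\pi_i$, where $\pi_i := \pr(A_i\text{ is valid} \mid A_1,\dots,A_{i-1}\text{ form a packing})$. Let $P_{i-1} := \bigcup_{j<i}\C{A_j}{2}$, so $|P_{i-1}| = (i-1)\C{k}{2}$ on the conditioning event, and set $Y_i := |\{e\in P_{i-1} : e \subseteq A_i\}|$; then $\pi_i = \pr(Y_i = 0 \mid A_1,\dots,A_{i-1})$ and
\[
\mu_i := \E[Y_i \mid A_1,\dots,A_{i-1}] = (i-1)\C{k}{2}\cdot \frac{\C{n-2}{k-2}}{\C{n}{k}} \sim \frac{(i-1)k^4}{2n^2}.
\]
The identity $-\log\gz = \sum_i (-\log \pi_i)$ turns per-step bounds on $\pi_i$ into tail bounds on $\gz$.

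For part (b), where $t \ll n^2/k^3$ keeps $\mu_i = o(k)$ throughout, I would apply Janson's inequality to the events $\{e \subseteq A_i\}$, $e \in P_{i-1}$, first passing to the Bernoulli model for $A_i$ at the cost of only a $\mathrm{poly}(k)$ prefactor. The Janson interaction parameter
\[
\Delta_i := \sum_{\substack{e \ne f \in P_{i-1}\\ e \cap f \ne \emptyset}} \pr(e \cup f \subseteq A_i) \sim \Bigl(\sum_v \C{d_v}{2}\Bigr)(k/n)^3
\]
satisfies $\Delta_i/\mu_i = O((i-1)k^3/n^2) = o(1)$ once one shows that the degrees $d_v$ of $P_{i-1}$ concentrate around their mean $(i-1)k^2/n$. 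Janson then yields $\pi_i \le \exp(-(1-o(1))\mu_i)$, and summing over $i$ gives $-\log\gz \ge (1-o(1))\,t^2 k^4/(4n^2)$.

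For part (a), with $t = Dn^2/k^3$, $\mu_i$ reaches $\Theta(Dk)$ while $\Delta_i$ becomes comparable to or larger than $\mu_i$, so basic Janson no longer suffices. The extended Janson inequality $\pi_i \le \exp(-\mu_i^2/(2\mu_i + 4\Delta_i))$ still gives per-step rates of $\Omega(k)$ as soon as $\mu_i \gtrsim k$, and summing produces $\gz \le \exp(-\Omega(tk))$; for $D = O(1)$ this already subsumes the claimed $\exp(-\beta D tk)$ for suitable $\beta$. The additional $\log D$ saving for $D > e$ should come from boosting the per-step rate to $\Omega(k\log(\mu_i/k))$ in the deep regime $\mu_i \gg k$---for instance via a direct independent-set / counting bound in the graph $P_{i-1}$ on $[n]$ of average degree $\bar d \sim (i-1)k^2/n$---and integrating the improved rate in $i$ to produce a $\log D$ multiplier. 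The main obstacles I anticipate are (i) justifying the typicality of $P_{i-1}$ under the conditional packing distribution uniformly in $i$ (so that the stated $\Delta_i$-estimate is valid with negligible exception), which should follow from a switching or martingale argument; and (ii) sharpening the Janson-style per-step bound in the high-interaction regime to extract the $\log D$ gain in part (a).
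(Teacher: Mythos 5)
Your sketch of part (b) is essentially the paper's argument. The paper also exposes $A_1,\dots,A_t$ sequentially, bounds each conditional factor by the basic Janson inequality applied to the events $\{e\subseteq A_i\}$ for $e\in P_{i-1}$ (passing to the Bernoulli model $B\sim \mathrm{Bin}(n,(1-\gd)k/n)$, at the harmless cost of a $\pr(|B|\le k)^{-1}=1+o(1)$ factor, using that the target event is decreasing in $B$), and handles your ``typicality'' obstacle (i) by introducing a decreasing degree-control event $\Q$ — that $\sum_{d(j)\ge \gd n/k^2} d(j)$ is small — whose failure probability is estimated \emph{unconditionally} via negative association of the degrees $d(j)$. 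On $\Q_{i-1}$ the pairs touching high-degree vertices are discarded and the remaining $\gD_i$ is $o(\mu_i)$, exactly as your $\gD_i/\mu_i=O((i-1)k^3/n^2)$ computation predicts. So modulo details your (b) plan is correct and matches the paper.

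Part (a) has a real gap, which you partly acknowledge. In the single-round sequential exposure, extended Janson saturates: for $i\asymp t=Dn^2/k^3$ one has $\mu_i\asymp Dk$ while $\gD_i\asymp D\mu_i$, so $\mu_i^2/(2\mu_i+4\gD_i)=\Theta(k)$ with an absolute constant \emph{independent of $D$}, and summing gives only $\gz<\exp[-\Theta(tk)]$ with no $\log D$. That logarithmic gain is not optional: the deduction of Theorem~\ref{ASWrong} after~\eqref{Ebd} needs $\gb\log D>C$ for a $D$ we get to choose, so a bound capping at a fixed multiple of $tk$ cannot handle every $\RRR$. You flag this as obstacle (ii) and gesture at ``a direct independent-set/counting bound'' for $P_{i-1}$, but offer no mechanism, and a single-round argument sees only the global condition ``$A_i$ is independent in the union of nearly-disjoint $k$-cliques $\bigcup_{j<i}K_{A_j}$,'' which is hard to exploit directly. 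The paper's key move — which it calls ``seemingly crucial'' — is structurally different: split each $A_i=B_i\cup C_i$ into two halves, condition on $\{B_i\}$ being a packing, and observe that near-disjointness of the $A_i$'s then \emph{localizes} to the per-vertex condition that for each $x$, $\{B_i:x\in C_i\}$ is a matching in the $l$-graph $\h=\{B_i\}$. Counting the ways to choose the sets $S_x=\{i:x\in C_i\}$ via Theorem~\ref{TMP} (which bounds the number of $d$-matchings in a fixed nearly-disjoint hypergraph; the $\log c$ factor is produced there) and comparing against the total number of choices of $C_1,\dots,C_t$ delivers the $\log D$. Without that two-stage decomposition and the separate matching estimate, your route does not recover part (a).
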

\nin
(Note for perspective that for $t$ as in (a) the
bound in (b), which is essentially ideal if we have \eqref{tsmall},
becomes $\exp[-(1-o(1))Dtk/4]$.
We won't bother with the silly case $t=O(1)$---which would require
occasionally replacing $t^2$ by $t(t-1)$---and retain the uninteresting
constant bounds for $t=O(n/k^2)$ only because they require no extra effort.)

\mn

Before continuing we observe that this gives Theorem~\ref{ASWrong}.
We may bound $\pr(\nu_k(G) \geq t)$ by
the expected number of $t$-packings
in $G$ ($=G_{n,1/2}$), which is less than
\beq{Ebd}
\gz \Cc{n}{k}^t2^{-\C{k}{2}t}
= \gz \left[\Cc{n}{k}2^{-\C{k}{2}} \right]^t < \gz \exp[Ct\log n],
\enq
where $\gz \Cc{n}{k}^t$ (crudely) bounds the number of $t$-packings
in $\K$, each of which appears in $G$ with probability
$2^{-\C{k}{2}t}$, and the inequality is given by \eqref{fk}.
Now letting $t=Dn^2/k^3$ with $D$ ($>e$) chosen so that $\gb \log D > C$
(where $\gb$ is as in Theorem~\ref{TMP2} and we
recall $k\sim 2\log_2 n$)
and combining \eqref{Ebd} with the second bound in \eqref{zetabd}
gives
\[
\pr(\nu_k(G)> Dn^2/k^3) < \exp[-n^2/k^2]. \tag*{\qed}
\]

\mn

The argument for Theorem~\ref{TMP2}(a)
(the part needed for Theorem~\ref{ASWrong})
is mainly based on Theorem~\ref{TMP} below,
which we next spend a little time motivating.

To begin, we remind the reader that there is a natural entropy-based approach
to problems ``like" that addressed by Theorem~\ref{TMP2};
this approach was introduced by J. Radhakrishnan \cite{JR}
in his proof of Br\'egman's Theorem~\cite{Bregman} and followed more recently in
(e.g.) the Linial-Luria upper bound on the number of Steiner triple systems \cite{LL}
and its extension to more general designs by Keevash \cite{Keevash}.
In our situation the entropy argument works up to a point, but we don't see
how to push it to a proof of Theorem~\ref{TMP2} (or a disproof of Conjecture~\ref{ASConj})
and will take a different approach.

A first simple (but seemingly crucial) idea is that we should choose our packing in two rounds,
the first round specifying just half, say $B_i$, of each $A_i$.
A \emph{necessary} condition for a packing is then:
\[
\mbox{\emph{for each $x\in [n]$,
$\{B_i: x\in A_i\sm B_i\}$ is a matching.}}
\]
Modulo a certain amount of fiddling, this gets us to the following situation,
in which $l$ will be $k/2$.

We assume $\h$ is a nearly-disjoint $l$-graph ($l$-uniform hypergraph) with $n$ vertices and $t$ edges,
and $\m=\{e_1\dots e_m\}$ is a random (uniform) $m$-subset of $\h$,
and are interested in
\[
\xi =\xi_\h(m) =\pr(\mbox{$\m$ is a matching}).
\]
(When we apply this to Theorem~\ref{TMP2}, $t$ will be as in the theorem
and $m$ will be something like $t\llll /n$.)
%
Setting $c = m\llll^2/n$,
we again have a natural value for $\xi$, namely,
\beq{Mideal}
(1-\llll ^2/n)^{\C{m}{2}}~\approx ~\exp[- cm/2],
\enq
gotten by pretending independence of the events $\{e_i\cap e_j\neq\0\}$,
the natural value of whose probabilities is roughly $1-\llll^2/n$.
The next statement is perhaps our main point.

\begin{thm}\label{TMP}
If $t\gg n/l$ and $c=\min\{ml^2/n,t\llll /n\}$, then
\beq{ProbM}
\mathbb{P}(\mbox{$\m$ is a matching}) <
\left\{\begin{array}{ll}
\exp[-\gO( cm)]&\mbox{if $c\leq e$,}\\
\exp[-\gO( (\log c)m)]&\mbox{if $c>e$.}
\end{array}\right.
\enq
\end{thm}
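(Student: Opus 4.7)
The basic plan is to bound $N(m)$, the number of $m$-matchings in $\mathcal{H}$, and to derive $\xi = N(m)/\binom{t}{m}$ from this. The starting move is a reduction to a near-regular sub-hypergraph: partitioning vertices of $\mathcal{H}$ dyadically by degree and applying pigeonhole, I would single out a vertex class $V^{*} \subseteq [n]$ on which every vertex has degree $\Theta(\Delta)$ for some $\Delta$ comparable to the ``typical degree''; the sub-hypergraph supported on $V^{*}$ retains a constant fraction of $\mathcal{H}$'s edges, and since the target bound is only up to constants in the exponent, passing to this sub-hypergraph is free.

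Next, in the near-regular case I would enumerate ordered $m$-matchings sequentially. If $M_i = \{e_1,\ldots,e_i\}$ is already a matching, then nearly-disjointness forces each edge of $\mathcal{H}$ meeting $\bigcup M_i$ to be counted at most $l$ times in $\sum_{v\in\bigcup M_i} d(v) \geq i l \Delta$, so at least $\Omega(i\Delta)$ edges of $\mathcal{H}$ meet $\bigcup M_i$. Hence the number of valid choices for $e_{i+1}$ is $\leq t - \Omega(i\Delta)$, and multiplying gives
$$\xi \;\leq\; \prod_{i=0}^{m-1}\frac{t - \Omega(i\Delta)}{t-i}.$$

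The main obstacle --- the real content of Theorem~\ref{TMP} --- is that this naive product only evaluates to $\exp[-\Omega(m^{2}\Delta/t)]$, short of the targets $\exp[-\Omega(cm)]$ or $\exp[-\Omega(m\log c)]$ by a factor of roughly $t/m$ (or $l$) in the exponent. I would try to close the gap with two complementary refinements chosen according to the size of $c$. For $c$ small I would pass from the uniform $m$-subset to independent inclusion at rate $p = m/t$, using FKG (``is a matching'' is a decreasing event) to dominate in the right direction, and then apply Janson's inequality on the independent side; this delivers $\xi \leq \exp[-(1-o(1))\mu]$ with $\mu \sim cm/2$. For $c$ large --- where Janson's tail term $\Delta_{\text{J}}/\mu \sim c$ makes the standard bound vacuous --- I would instead run an encoding/container-style scheme: for each matching edge $e$ record a short label identifying $e$ among the $\Omega(c)$ other edges of $\mathcal{H}$ passing through some vertex of $e$, and show that matchings compatible with a given aggregate label contribute only $\binom{t}{m}/c^{\Omega(m)}$ to $N(m)$, yielding $\exp[-\Omega(m\log c)]$.

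I expect the encoding argument for large $c$ to be by far the hardest step, since the naive iterative and moment-based approaches do not exhibit the $\log c$ factor at all, and because the encoding has to interact cleanly with the degree-class reduction of Step~1. In particular, controlling matchings whose edges cross multiple dyadic degree classes --- and verifying that the combinatorial labels can be decoded without spending too much information on auxiliary data --- is where I expect most of the technical work to concentrate.
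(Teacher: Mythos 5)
Your sequential-counting plan is in the right spirit --- it is close to the paper's second (and easier) proof of Theorem~\ref{TMP} --- but as written it has two real gaps, one in the accounting and one in the regularization, and the proposed patches do not address either.

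\textbf{The accounting loses a factor of $l$ that the paper recovers, and your use of near-disjointness is the wrong direction.} You bound the number of edges meeting $\bigcup M_i$ by $\Omega(i\Delta)$ via the crude fact that each such edge is counted at most $l$ times in $\sum_{v\in\bigcup M_i}d(v)$. This is lossy. The paper instead tracks $\h_j=\{e\in\h : e\cap e_{j'}=\0\ \forall j'\le j\}$ and uses the telescoping identity $t-|\h_i|=\sum_{j\le i}I(e_j,\h_{j-1})$, which counts each forbidden edge exactly once. The whole game is then to show that $I(e_j,\h_{j-1})\gtrsim |\h_{j-1}|\,l^2/n$ for most $j$, and near-disjointness enters through a Cauchy--Schwarz computation (the paper's Proposition~\ref{CSProp}): in \emph{any} nearly-disjoint $l$-graph $\g$, the number of edges $e$ with $I(e,\g)<\gd|\g|l^2/n$ is at most $\gd|\g|+n/l$. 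The point is that near-disjointness gives $I(e,\g)=\sum_{x\in e}d_\g(x)-(l-1)$ with \emph{no} overcounting, which is why a typical incidence degree is $\approx l\cdot(\mbox{vertex degree})$, not the vertex degree; your proposal never extracts this. Once one has the proposition, a short Chernoff argument handles the atypical steps where $e_j$ lands in the low-incidence class, and multiplying the shrinkage over the last $m/2$ steps gives exactly $\exp[-\gO(\gd c m)]$ with $\gd\approx 1$ for $c\le e$ and $\gd\approx(\log c)/c$ for $c>e$ --- both regimes fall out of the single choice of the threshold $\gd$, so no case split of the kind you envision (Janson for small $c$, encoding for large $c$) is needed.

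\textbf{The dyadic reduction goes the wrong way, and the proposed substitutes inherit the same gap.} Passing to the sub-hypergraph induced on a near-regular vertex class $V^*$ deletes constraints, so it can only \emph{increase} $\Pr(\mbox{matching})$; and matchings are not constrained to live in $V^*$, so you cannot simply restrict. More importantly, any regularization of $\h$ at the outset is useless here, because the quantity you need to control is the incidence profile of the \emph{residual} hypergraph $\h_{j-1}$, which is an arbitrary nearly-disjoint sub-$l$-graph; Proposition~\ref{CSProp} is designed precisely to apply to that moving target. The same issue defeats the two patches: for the Janson step you would need to lower-bound $\mu\approx p^2|\{(e,f):e\cap f\neq\0\}|$ by $\gO(cm)$ for an \emph{adversarial} $\h$, which again requires something like Proposition~\ref{CSProp} (and even then $\gD_J/\mu\approx c$, so basic Janson is useless once $c$ is not tiny, as you note); and the ``short label among the $\gO(c)$ edges through a vertex of $e$'' encoding has no footing when vertices of low degree exist, which the proposition --- not regularization --- rules out for most edges. (For the record, the paper's \emph{first} proof of Theorem~\ref{TMP} is a counting argument somewhat in the spirit of your container sketch, but it too runs entirely on Proposition~\ref{CSProp}, and the authors note it only handles the large-$c$ regime.) The concrete missing idea is that Cauchy--Schwarz lemma; with it, your sequential plan, repaired to multiplicative bookkeeping, becomes essentially the paper's argument.
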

\nin
(We won't get into $t=O(n/l)$.
Of course the theorem evaporates if $t\leq n/l$, since $\h$ itself can then be a matching.)

\mn

Note that here, unlike in Theorem~\ref{TMP2}, we may think of $\h$ as chosen adversarially,
and should adjust expectations accordingly; in particular the probability in \eqref{ProbM}
can easily be zero, so at best we may hope that \eqref{Mideal} offers some guidance on
\emph{upper} bounds.  It's also true that, as shown by the following example, the probability of a
matching can easily be about $(tl/n)^{-m}$, so even the second part of \eqref{ProbM}
(the one that differs more seriously from \eqref{Mideal})
can't be much improved under the stated
hypotheses.
On the other hand---and more interestingly---it could be that \eqref{Mideal} is about
right (as an upper bound) if, say,
$\log(t l/n)\gg c $ ($=ml^2/n$).

\begin{proof}[Example]
For $t=sn/l$ with $l$ a prime power, let $\g$ consist of $s$ parallel
classes of an affine plane of order $l$, and let $\h$ be the disjoint union of $n/l^2$
copies of $\g$.  Then for $m\ll n/l$ and $e_1\dots e_m$ drawn uniformly and
independently from $\h$,
\[
\pr(\mbox{$\{e_1\dots e_m\}$ is a matching}) > s^{-(1-o(1))m},
\]
as follows from the observation that if $\{e_1\dots e_i\}$ is a matching then the number of
edges disjoint from $e_1\dots e_i$ is at least $n/l-i$ (and exactly this if
$\{e_1\dots e_i\}$ meets all copies of $\g$).
\end{proof}

\mn
\emph{Outline}
Theorems~\ref{TMP2} and \ref{TMP} are proved in Sections~\ref{PTMP2} and
\ref{SecMP}, following a quick large deviation review
(mainly for Theorem~\ref{TMP2}(b))
in Section~\ref{Prelim}.
The proof of Proposition~\ref{Plb} is sketched in Section~\ref{LB}.

\mn
\emph{Usage.}
For asymptotics we use $a\ll b$ and $a=o(b)$ interchangeably.
As is common we pretend all large numbers are integers and always assume
$n$ is large enough to support our arguments.  As mentioned earlier,
$\log $ is $\ln$.

\section{Preliminaries}\label{Prelim}
We will need the following ``Chernoff bounds"
(see e.g.\
\cite[Thm.\ 2.1 and Cor.\ 2.4]{JLR}; we won't need to deal with lower tails).
\begin{thm}
\label{thm:Chernoff}
If $X \sim \mathrm{Bin}(n,p)$ and $\mu = \mathbb{E}[X] = np$, then
\begin{align*}
\Pr(X > \mu + t) &< \exp\left[-t^2/(2(\mu+t/3))\right] ~~\forall t>0, \\
\label{Ch2}
\Pr(X > K\mu ) &<\exp[-K\mu \log (K/e)] ~~\forall K.
\end{align*}
\end{thm}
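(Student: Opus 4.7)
The plan is to prove both bounds via the classical exponential moment (Chernoff/Cram\'er) method, using $e^{\lambda X}$ for $\lambda>0$.

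First I would invoke Markov's inequality in the form $\Pr(X>a)\le e^{-\lambda a}\,\mathbb{E}[e^{\lambda X}]$. Since $X\sim\mathrm{Bin}(n,p)$ is a sum of i.i.d.\ Bernoullis, the MGF factorizes as $\mathbb{E}[e^{\lambda X}]=(1-p+pe^{\lambda})^n$, and applying $1+x\le e^x$ to each factor gives $\mathbb{E}[e^{\lambda X}]\le\exp[\mu(e^{\lambda}-1)]$. This yields the master bound
\[
\Pr(X>a)\le\exp\bigl[\mu(e^{\lambda}-1)-\lambda a\bigr]\qquad(\lambda>0),
\]
on which both claimed inequalities will be based by suitable choices of $\lambda$.

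For the additive bound I would set $a=\mu+t$ and optimize in $\lambda$. Differentiating $\mu(e^{\lambda}-1)-\lambda(\mu+t)$ and setting it to zero gives $\lambda=\log(1+t/\mu)$ and the standard Bennett bound $\Pr(X>\mu+t)\le\exp[-\mu\,h(t/\mu)]$, where $h(x):=(1+x)\log(1+x)-x$. The remaining ingredient is the elementary scalar inequality $h(x)\ge x^{2}/(2+2x/3)$ for $x\ge 0$, which can be verified by checking that the function $(2+2x/3)h(x)-x^{2}$ together with its first two derivatives vanishes at $x=0$, while its third derivative is nonnegative. Substituting $x=t/\mu$ gives $\mu h(t/\mu)\ge t^{2}/(2(\mu+t/3))$, which is the first bound.

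For the multiplicative bound I would take $a=K\mu$ and plug $\lambda=\log K$ into the master bound (this is in fact the optimizing choice, corresponding to $1+t/\mu=K$), obtaining
\[
\Pr(X>K\mu)\le\exp\bigl[\mu(K-1)-K\mu\log K\bigr]=\exp\bigl[-K\mu\log(K/e)-\mu\bigr],
\]
which is even slightly stronger than claimed since $\mu\ge 0$. The only step with any real content is the scalar inequality $h(x)\ge x^{2}/(2+2x/3)$; the rest is direct computation and poses no serious obstacle.
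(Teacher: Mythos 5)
Your proposal is correct, and it's worth noting that the paper does not prove this theorem at all---it simply cites \cite[Thm.\ 2.1 and Cor.\ 2.4]{JLR}, so there is no ``paper proof'' to compare against. Your argument is the standard Chernoff/Cram\'er derivation: Markov applied to $e^{\lambda X}$, the bound $\mathbb{E}[e^{\lambda X}]\le\exp[\mu(e^{\lambda}-1)]$, optimization in $\lambda$, and then the elementary inequality $h(x)\ge x^2/(2+2x/3)$ (where $h(x)=(1+x)\log(1+x)-x$) to pass from the Bennett form to the stated form. The third-derivative check you describe for that scalar inequality is correct: $g(x):=(2+2x/3)h(x)-x^2$ satisfies $g(0)=g'(0)=g''(0)=0$ and $g'''(x)=\tfrac{4}{3}\cdot\tfrac{x}{(1+x)^2}\ge0$ for $x\ge0$. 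For the multiplicative bound, the choice $\lambda=\log K$ requires $K>1$ for $\lambda>0$; for $K\le e$ the stated bound exceeds $1$ and is vacuous, so this is no restriction---a point worth making explicit, but not a gap. Altogether this is a correct, self-contained proof of a result the paper takes as known.
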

\nin
(Of course the second bound is only of interest for slightly large $K$.
We won't need to deal with lower tails.)

\mn

Though it could be avoided, the following less usual bit of machinery is nice
and will be convenient for us at one point.
Recall that r.v.'s $\xi_1\dots \xi_n$ are {\em negatively associated}
if $\E fg\leq \E f\E g$ whenever there are disjoint $I,J\sub [n]$
for which $f$ and $g$ are increasing functions
of $\{\xi_i:i\in I\}$ and $\{\xi_i:i\in J\}$ (respectively).
As observed in \cite[Lemma 8.2]{DR}, Chernoff-type bounds
usually apply at the
level of negatively associated r.v.'s; we state only what we need in this direction:

\begin{prop}\label{NAprop}
{\rm (a)} If $A_1\dots A_t$ are drawn uniformly and independently from
$\C{[n]}{k}$, then the degrees $d(j) = |\{i:j\in A_i\}|$ ($j\in [n]$) are
negatively associated, as are any r.v.'s $\xi_1\dots \xi_n$
with $\xi_j$ an increasing function of $d(j)$.

\mn
{\rm (b)}
If $\xi=\sum \xi_i$ with the $\xi_i$'s
negatively associated, then for any $\ga$ and $\gl>0$,
\beq{dr}
\mbox{$\pr(\xi > \ga) ~<~e^{-\gl \ga}\E e^{\gl \xi}
~\leq~ e^{-\gl \ga}\prod\E e^{\gl \xi_i}.$}
\enq
\end{prop}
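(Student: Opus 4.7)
The plan is to reduce both parts to standard closure properties of negatively associated (NA) random variables, so almost no new work is needed beyond identifying the correct basic building block and stacking the closure rules.

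For part (a), I would first fix a single index $i$ and observe that the indicator vector $(\mathbf{1}_{j\in A_i})_{j\in[n]}$ is a zero-one ``permutation distribution'': uniform over $\{0,1\}^n$-vectors with exactly $k$ ones. Such permutation distributions are NA by the classical Joag-Dev--Proschan theorem (this is one of the main examples given in \cite{DR}). Since the $t$ vectors $(\mathbf{1}_{j\in A_i})_{j\in[n]}$, $i=1,\dots,t$, are mutually independent, their concatenation into a single family of $tn$ variables indexed by $[n]\times[t]$ is still NA (NA is preserved under taking independent unions). Now each $d(j)=\sum_{i=1}^t \mathbf{1}_{j\in A_i}$ is an increasing function of the coordinates $\{(j,i):i\in[t]\}$, and these coordinate sets are disjoint for distinct $j$; so the ``disjoint increasing functions'' closure rule gives that $(d(j))_{j\in[n]}$ is NA. Applying this same closure rule one more time, with $\xi_j$ an increasing function of $d(j)$ (which depends only on the single coordinate block for $j$), yields the second assertion of (a).

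For part (b), the first inequality in \eqref{dr} is just Markov applied to $e^{\gl \xi}$: $\pr(\xi>\ga)=\pr(e^{\gl\xi}>e^{\gl\ga})\leq e^{-\gl\ga}\E e^{\gl\xi}$. For the second inequality, I would use that since $\gl>0$, each $e^{\gl\xi_i}$ is a nonnegative increasing function of $\xi_i$ alone. Then I split the product $\prod_{i=1}^n e^{\gl\xi_i}$ as $e^{\gl\xi_1}\cdot \prod_{i\geq 2}e^{\gl\xi_i}$ and apply the NA inequality with $I=\{1\}$, $J=\{2,\dots,n\}$ to bound $\E\bigl[e^{\gl\xi_1}\prod_{i\geq 2}e^{\gl\xi_i}\bigr]\leq \E e^{\gl\xi_1}\cdot \E \prod_{i\geq 2}e^{\gl\xi_i}$; iterating gives $\E e^{\gl\xi}\leq \prod_i \E e^{\gl\xi_i}$.

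The main obstacle is essentially nonexistent: everything reduces to the Joag-Dev--Proschan permutation-distribution result together with the three standard NA closure properties (independent union, disjoint increasing functions, and the Markov-plus-MGF-factorization step). The only real decision is where to point the reader; the authors already cite \cite{DR} for the NA implies Chernoff principle, and that same reference collects the closure rules and permutation-distribution fact in a form that can be invoked verbatim, so I would simply package the argument as a short verification rather than a stand-alone proof.
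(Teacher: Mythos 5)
Your proposal is correct, and it is more self-contained than what the paper actually does: the paper proves neither part, citing \cite[Propositions 3.1 and 3.2]{EKRI} for (a) and attributing (b) to \cite[Lemma 8.2]{DR} (via the sentence preceding the proposition), with only the remark that the first inequality in \eqref{dr} is the standard Markov step. Your argument supplies the underlying mechanics that those references encapsulate: for (a), the Joag-Dev--Proschan ``permutation distribution'' fact (a uniform $k$-subset indicator vector is a random permutation of $k$ ones and $n-k$ zeros, hence NA), closure of NA under taking independent unions, and closure under componentwise-disjoint increasing functions; for (b), Markov plus the one-at-a-time factorization $\E\bigl[e^{\gl\xi_1}\prod_{i\ge 2}e^{\gl\xi_i}\bigr]\le \E e^{\gl\xi_1}\,\E\prod_{i\ge 2}e^{\gl\xi_i}$, iterated (using that a sub-family of an NA family is again NA, which is needed to justify the iteration and which you invoke implicitly). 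Each step is a genuine closure property of NA and the stacking order is right, so the proof goes through. The only cosmetic discrepancy is that Markov gives $\le$ rather than the strict $<$ written in \eqref{dr}, but that is a triviality of the paper's phrasing, not a gap in your argument. In short: same underlying toolkit, but you unpack the citations into an explicit verification, which is a reasonable thing to do if one wants the proposition to stand on its own.
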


\nin
For (a) see \cite[Propositions 3.1 and 3.2]{EKRI} (as remarked there,
the statement is probably not news to anyone interested in such things).
The content of \eqref{dr} is the second inequality; the
first, included here just for orientation, is the usual use
of Markov's Inequality in proving Chernoff bounds.


\section{Proof of Theorem~\ref{TMP2}}\label{PTMP2}


\begin{proof}[Proof of Theorem~\ref{TMP2}\rm{(a)} (given Theorem~\ref{TMP})]
%
As mentioned in Section~\ref{Intro}, a crucial first idea
is that we should choose the $A_i$'s in two stages.
For simplicity suppose $k$ is even, say $k=2l$.
For $i\in [t]$, let $A_i=B_i\cup C_i$, with $B_i$ uniform from $\C{[n]}{l}$
and $C_i$ uniform from
$\C{[n]\sm B_i}{l}$
(with the choices for different $i$'s independent), and let
$\h=\{B_1\dots B_t\}$.
Let $\pee=\{\mbox{$A_1\dots A_t$ form a packing}\}$
(the event in \eqref{gz}) and
$\Q=\{\mbox{$B_1\dots B_t$ form a packing}\}$.
Of course $\Q$ is a prerequisite for $\pee$,
so we need only show
\beq{MP3}
\pr(\pee|\Q) < \left\{\begin{array}{ll}
\exp[-\gO( D) tk]&\mbox{if $D\leq e$ (say),}\\
\exp[-\gO (\log D)tk]&\mbox{if $D>e$.}
\end{array}\right.
\enq

From this point we fix a packing $\{B_1\dots B_t\}$
and consider the probability
of $\pee$ given $\{\h=\{B_1\dots B_t\}\}$.
The problem is now more about counting than probability:
we want to bound the number of ways of choosing
$\g:=\{C_1\dots C_t\}$
so that the resulting $A_i$'s form a packing.

We may think of choosing the $C_i$'s by first choosing degrees $d_j:=d_\g(j)$ ($j\in [n]$)
satisfying
\beq{sumd}
\mbox{$\sum d_j = tl$}
\enq
and then sets
\[
S_j:=\{i:j\in C_i\}
\]
satisfying, for each $j\in [n]$,
\[
\mbox{$\{B_i:i\in S_j\}$ is a matching}
\]
(another prerequisite for $\pee$).
Of course only a small fraction of such choices correspond to legitimate $C_i$'s,
but this overcount turns out to be affordable.

Given $d_j$'s the number of choices of $S_j$'s as above
is
$
\prod_{j\in [n]} N(d_j)
$,
where $N(d)=N_\h(d)$ is the number of $d$-matchings in $\h$.

Set $u =tl/n$ (the average of the $d_j$'s).
Since $\sum\{d_j:d_j\leq u/2\} \leq un/2$, \eqref{sumd} implies
\beq{sumdj}
\sum\{d_j:d_j> u/2\} \geq un/2 =tl/2.
\enq
For bounding $N(d)$ when $d\leq u/2$, we use the trivial
$   
N(d)\leq \Cc{t}{d}.
$  
For larger $d$, noting that
$u/2= Dn/(16l^2)$,
we may apply Theorem~\ref{TMP} with $m=d$ and $c \geq D/(16)$ (and $t=t$,
so $tl/n = Dn/(8l^2)\gg D$) to obtain
\beq{larged}
N(d)<e^{-Bd}\Cc{t}{d},
\enq
where
\[
B=\left\{\begin{array}{ll} \gO(D)&\mbox{if $D\leq 16e$,}\\
\gO(\log D)&\mbox{otherwise.}
\end{array}\right.
\]

Thus for a particular set of $d_j$'s the number of ways to choose the $S_j$'s is less than
\beq{bd1}
\mbox{$\exp[-B\sum^*d_j]\cdot \prod \Cc{t}{d_j} < e^{-Btl/2}\prod \Cc{t}{d_j} $,}
\enq
where $\sum^*$ runs over $j$ with $d_j> u/2$
(and we use $u=tl/n$ and \eqref{sumdj}).
For the product we have, again using \eqref{sumd},
\beq{bd2}
\mbox{$\prod \Cc{t}{d_j}  < (et)^{\sum d_j}\prod d_j^{-d_j} \leq (et)^{tl} (tl/n)^{-tl} = (en/l)^{tl}$}
\enq
(since convexity of $x\log x$ implies that,
given \eqref{sumd}, $\prod d_j^{d_j}$ is minimum when $d_j = u$ for all $j$).
The (negligible) number of ways
to choose the $d_j$'s is
\beq{bd3}
\mbox{$\C{tl+n-1}{n-1} < n^n$.}
\enq

On the other hand, the (\emph{total}) number of ways of choosing $C_1\dots C_t$
(again, for given $B_i$'s)
is
\[
\Cc{n-l}{l}^t > l^{-t}(en/l)^{tl}
\]
(since $l\ll \sqrt{n}$, Stirling's formula gives $\C{n-l}{l} \sim (2\pi l)^{-1/2 }(en/l)^l$),
and combining this with \eqref{bd1}-\eqref{bd3}
we find that the probability of $\pee$ (given the specified $B_i$'s) is less than
\[
n^n l^t e^{-Btl/2}(en/l)^{tl}(en/l)^{-tl} = e^{-(1-o(1))Btl/2}. \qedhere
\]
\end{proof}


\mn

\begin{proof}[Proof of Theorem~\ref{TMP2}\rm{(b)}]
Set $t=\eps n^2/k^3$ (so $\eps = o(1)$).
Let $\gd$ be some sufficiently slow $o(1)$ and set $t_0=\gd t$.
(We need $ \gd^2\gg\eps$ and, at \eqref{ksmall},
$\exp[-\gO(\gd^2k)]\ll \gd$.)
Set $\h_i=\{A_1\dots A_i\}$ and $\h=\h_t$, and
write $d_i$ and $d$ for degrees in $\h_i$ and $\h$.

We first need to dispose of some pathological situations in which vertices
with very large degrees meet too many edges of $\h$, to which end we set
$a_0=\gd n/k^2$ and
$
W=\{j\in [n]:d(j)\geq a_0\},
$
and consider the event
\[
\mbox{$\Q = \{\sum_{j\in W}d(j) <\gd t_0k\}$}.
\]

\nin
\emph{Claim 1.}
$
~~\pr (\ov{\Q}) < \exp[-t^2k^4/n^2]
$

\begin{proof}
Theorem~\ref{thm:Chernoff}
applied to $d(j) \sim \textrm{Bin}(t,k/n)$ is easily seen to imply that for
any $a\geq a_0$
(using $\gd \gg \eps$ to say $a_0\gg tk/n$),
\[   
\pr(d(j) \geq a) < (e\eps/\gd)^a
\]   
which for $\xi_j:=d(j)\1_{\{d(j)\geq a_0\}}$ implies
\beq{ea0}
\mbox{$\E e^{\xi_j} ~<~ 1 + \sum_{a\geq a_0} a(e\eps/\gd)^a
~<~ \exp [\eps^{\go(1)}].$}
\enq
Moreover, by Proposition~\ref{NAprop}(a), the $\xi_j$'s are negatively associated,
so part (b) of the proposition gives, for $\xi=\sum\xi_j$,
\[
\pr(\ov{\Q})= \pr (\xi >\gd t_0k) < \exp[-\gd t_0k + n\eps^{\go(1)}] = \exp[-(1-o(1))\gd t_0k],
\]
which, since $\gd^2 \gg \eps$, is less than the bound in Claim 1.
(Note also that $\gd t_0k\gg n\eps^{\go(1)}$ is the same as $\gd^2\eps n/k^2\gg \eps^{\go(1)}$.)

\end{proof}

Now let
$\pee_i = \{\mbox{$\h_i$ is a packing}\}$, $\pee=\pee_t$,
$
W_i=\{j\in [n]:d_i(j)\geq a_0\}
$
and
$
\mbox{$\Q_i= \{\sum_{j\in W_i}d_i(j) <\gd t_0k\}$}.
$

\mn
\emph{Claim 2.}
For $i>t_0$, if $A_1\dots A_{i-1}$ satisfy $\pee_{i-1}\Q_{i-1}$, then
\[
\pr(|A_i\cap A_j|\leq 1 ~\forall j<i) < \exp [-(1-o(1))ik^4/(2n^2)].
\]
Once this is established, we have
(noting that $\pee\Q =\cap (\pee_i\Q_i)$, since in fact $\pee_1\supseteq \cdots \supseteq \pee_t=\pee$
and $\Q_1\supseteq \cdots \supseteq \Q_t=\Q$),
\begin{eqnarray*}
\mbox{$\pr(\pee) $}\leq \mbox{$\pr(\ov{\Q}) +\pr(\pee \Q)$}
&\leq &\mbox{$\pr(\ov{\Q}) + \prod_i\pr(\pee_i\Q_i|\pee_{i-1}\Q_{i-1}),$}\\
&\leq &\mbox{$\pr(\ov{\Q}) + \prod_{i=t_0}^t\pr(\pee_i|\pee_{i-1}\Q_{i-1}),$}
\end{eqnarray*}
which, according to Claims 1 and 2, is less than
\[
\mbox{$\exp[-\frac{t^2k^4}{n^2}] + \exp [-(1-o(1))\sum_{i=t_0}^{t-1} \frac{ik^4}{2n^2}]
=\exp [-(1-o(1))\C{t}{2}\frac{k^4}{2n^2}]
$,}
\]
completing the proof of Theorem~\ref{TMP2}(b).\end{proof}

\begin{proof}[Proof of Claim 2.]
Let $\sss=\{P_1\dots P_\mm\}$ be the set of pairs contained in (at least one of)
$A_1\dots A_{i-1}$ and not meeting $W_{i-1}$,
and
\[
\T= \{X\sub [n]: \Cc{X}{2}\cap \sss=\0\}\supseteq
\{X\sub [n]: |X\cap A_j|\leq 1 ~\forall j<i\}.
\]
It is enough to bound $\pr(A_i\in \T)$.
In view of $\pee_{i-1}$, the number of pairs covered by $A_1\dots A_{i-1}$ is
$(i-1)\C{k}{2}$, while $\Q_{i-1}$ says that the number of these that meet $W_{i-1}$ is at most
$\gd t_0k(k-1)$; thus $m$ ($=|\sss|$) $\sim i k^2/2$.
Note also that the number of non-disjoint (unordered) pairs from $\sss$ is less than
\[
\mbox{$\sum d_\sss^2(j)/2 ~< ~a_0k\sum d_\sss (j)/2\sim \gd n ik/2$.}
\]

For a silly technical reason (see \eqref{Jlast}) we now treat $t\ll n^2/k^4$ separately.
Let $R_l=\{A_i\supseteq P_l\}$ ($l\in [\mm]$).  Then using $\pr(A_i\supseteq I)\sim (k/n)^{|I|}$
for fixed $|I|$ together with the above asymptotics yields
\[
\mbox{$\pr (A_i\not\in \T)\geq \sum\pr(R_l) -\sum \sum \pr(R_lR_{l'})\sim ik^4/(2n^2)$.}
\]
(The first sum is asymptotic to the r.h.s.\  and the double sum is asymptotically
at most $(\gd nik/2)(k/n)^3 + (ik^2/2)^2(k/n)^4 = \gd ik^4/(2n^2) + i^2k^8/(4n^4) $,
which is $o(ik^4/n^2)$ since we assume $t\ll n^2/k^4$.)

\bn

Now assume $t=\gO(n^2/k^4)$ (so $\eps =\gO(1/k)$).
Set $p=(1-\gd) k/n$
and let $B$ be the random subset of $[n]$ gotten by including
each element with probability $p$, independent of other choices.
According to the ``Basic Janson Inequality"
(\cite{JLRJanson} or e.g.\ \cite[Ch. 8]{AS})),
\beq{JI}
\pr(B\in \T)\leq e^{-\mu+\gD},
\enq
where (\emph{cf.} the above discussion for $t\ll n^2/k^4$)
$\mu=\mm p^2\sim ik^4/(2n^2)$ and
\[    
\gD ~=~
\mbox{$\frac{1}{2}\sum_j d_\sss(j)(d_\sss(j)-1) p^3
~<~(1-o(1))\gd ik^4/(2n^2)~\ll~\mu$}.
\]
Thus \eqref{JI} gives the desired bound with $B $ in place of $A_i$; that is,
\[
\pr(B\in \T) <
\exp [-(1-o(1))ik^4/(2n^2)].
\]
Finally, we combine this with
$\pr( |B|> k) < \exp[-\gO(\gd^2 k)]$ (see Theorem~\ref{thm:Chernoff}) to obtain
\begin{eqnarray}
\pr(A_i\in \T) &\leq & \pr(B\in \T||B|\leq k)
< \pr(B\in \T)/\pr( |B|\leq k)\nonumber\\
&=& \pr(B\in \T)(1+e^{-\gO(\gd^2 k)})
<\exp [-(1-o(1))ik^4/(2n^2)]~~~~~\label{Jlast}
\end{eqnarray}
(note $\eps=\gO(1/k)$ and the assumed $\exp[-\gO(\gd^2k)]\ll \gd$
give
$\exp[-\gO(\gd^2k)]\ll \eps \gd k = t_0k^4/n^2$).
\end{proof}


\section{Proof of Theorem~\ref{TMP}}\label{SecMP}

We will give two proofs; the second is easier and proves more
(\emph{as far as we can see,} the first handles only the more interesting case of larger $c$), but
we include the first, which was our original argument, as it seems to us
the more interesting.
We will not try to optimize the implied constants in \eqref{ProbM}.

In each proof the following observation,
which is where we use near-disjointness,
will play a key role.
For an $l$-graph $\g$ and set $e$ (in practice a member of $\g$), let
\[
I(e,{\cal G}) =|\{ g\in {\cal G}: e\cap g \not= \emptyset\}|.
\]
\begin{prop}\label{CSProp}
For any nearly-disjoint $l$-graph $\g$ on n vertices and $\gd>0$,
\beq{B large}
|\{e\in \g:I(e,\g) < \gd |\g|l^2/n\}| <\delta |\g| + n/\llll .
\enq
\end{prop}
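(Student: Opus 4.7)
The plan is to use near-disjointness to convert $I(e,\g)$ into a pure vertex-degree sum, then apply Cauchy--Schwarz to a two-way count of incident pairs.

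First I would exploit near-disjointness as follows. For any $e\in\g$ and distinct $u,v\in e$, no other edge $g\neq e$ can contain both $u$ and $v$ (else $|g\cap e|\geq 2$). So the collections $\{g\in\g\setminus\{e\}:v\in g\}$, indexed by $v\in e$, are pairwise disjoint, and summing gives the identity
\[
I(e,\g)=1+\sum_{v\in e}(d(v)-1)=\sum_{v\in e}d(v)-(l-1),
\]
where $d(v)$ is the degree of $v$ in $\g$.

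Next, let $t=|\g|$ and $B=\{e\in\g:I(e,\g)<\gd tl^2/n\}$, and write $d_B(v)$ for the degree of $v$ in $B$. I would double-count ordered pairs $(e,g)\in B\times\g$ with $e\cap g\neq\0$. The definition of $B$ gives
\[
\sum_{e\in B}I(e,\g)<|B|\cdot\gd tl^2/n.
\]
Applying the near-disjoint identity above with $B$ in place of $\g$ and summing over $g\in\g$ yields, after a short computation (the $+1$ being contributed only when $g\in B$),
\[
\sum_{g\in\g}I(g,B)=\sum_v d(v)d_B(v)-|B|(l-1).
\]

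Finally, since $B\sub\g$ gives $d(v)\geq d_B(v)$, Cauchy--Schwarz supplies
\[
\sum_v d(v)d_B(v)\geq\sum_v d_B(v)^2\geq\frac{(\sum_v d_B(v))^2}{n}=\frac{|B|^2l^2}{n},
\]
so combining the two sides of the count produces $|B|^2l^2/n<|B|(l-1)+|B|\gd tl^2/n$, and dividing through by $|B|l^2/n$ gives $|B|<n/l+\gd t$ as claimed. There is no substantive obstacle here; the only thing to track carefully is the ``$+1$'' in the formula for $I(e,\g)$ arising from $e$ itself, which is precisely what produces the additive $n/l$ term in the conclusion.
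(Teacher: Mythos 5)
Your proof is correct and takes essentially the same approach as the paper's: both exploit near-disjointness to write $I(e,\g)$ as a vertex-degree sum (the paper phrases this as $\sum_{x\in e}d_\g(x) < \gd|\g|l^2/n + l -1$ for $e$ in the bad set, you as the identity $I(e,\g)=\sum_{v\in e}d(v)-(l-1)$), then double-count incidences between the bad set and $\g$ and finish with Cauchy--Schwarz applied to $\sum_v d_B(v)^2 \geq (\sum_v d_B(v))^2/n$. The only cosmetic difference is that the paper runs the inequality $\sum_x d_\sss(x) d_\g(x)=\sum_{e\in\sss}\sum_{x\in e}d_\g(x)$ directly rather than first symmetrizing via $\sum_{e\in B}I(e,\g)=\sum_{g\in\g}I(g,B)$; the substance and the role of every hypothesis are the same.
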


\begin{proof}
Writing $\sss$ for the set in \eqref{B large}, we have
$
\sum_{x}d_{\sss}(x)= \llll  |\sss|.
$
On the other hand, near-disjointness implies that
for any $e\in \sss$,
\[
\mbox{$\sum_{x\in e}d_\g(x)
< \delta |\g|\llll ^2/n +\llll -1,$}
\]
yielding
\[
\mbox{$\sum_{x}d_{\sss}(x)d_\g(x)
=\sum_{e\in \sss}\sum_{x\in e}d_\g(x)
<|\sss|(\delta |\g|\llll ^2/n +\llll -1)$}
\]
and
\[
\mbox{$n\sum_x d^2_{\sss}(x)
~\leq ~n \sum_{x}d_{\sss}(x)d_{{\g}}(x)
~< ~ |\sss|(\delta |\g|\llll ^2+n(\llll -1))$.}
\]
Combining and using Cauchy-Schwarz we have
\[
\mbox{$\llll ^2|\sss|^2 =
\left( \sum_{x}d_{\sss}(x)\right)^2
< |\sss|(\delta |\g|\llll ^2+n(\llll -1)),$}
\]
which implies \eqref{B large}.
\end{proof}

\begin{proof}[First proof of Theorem~\ref{TMP}.]
Here $\m $ and $\sss$ will always be
matchings (of $\h$) of sizes $m$ and $\gc m$ respectively.
As indicated above, we are now considering only the second regime in \eqref{ProbM},
so may assume $c$ is a bit large.
To bound the number of $\m$'s we first want an $\sss\sub \m$
for which the number of possible continuations $\m\sm\sss$ is ``small."
(The parameters $\gc,\gd,\vt$ will be set below.)

Given $\sss$, set
\[
\R =\R_\sss = \{e\in {\cal H}: I(e, {\cal S})=0\},
\]
\[   
\B=\B_\sss =\{e\in {\cal R}: I(e,{\cal R})\ge \delta |\R|\llll ^2/n\}
\]   
and $|\R|=r$.
If $\m \supseteq\sss$ then, trivially,
\beq{M-S in R}
{\cal M}\setminus {\cal S} \sub \cal R;
\enq
so the number of $\m$'s containing $\sss$ is at most
\beq{firstbd}
\Cc{r}{(1-\gamma)m}.
\enq
Note also that Proposition~\ref{CSProp} gives
\beq{B large'}
|{\cal R}\setminus {\cal B}| <\delta r + n/\llll ~=:r^*.
\enq
Since we will choose $\gd$ fairly small,
\eqref{B large'} (with \eqref{M-S in R}) will limit possibilities for
$(\m\sm\sss)\sm \B$.
We
next show that for any $\m $ there is some $\sss$
for which $(\m\sm \sss)\cap \B$ is small.

Given $\m $ and $\sss\sub \m$, with $\R,\B,r$ as above, let
\[
{\cal R}_1 = \{e\in {\cal R}: I(e,{\cal M})< \vartheta m\llll ^2/n\},
 ~~~{\cal R}_2={\cal R}\setminus{\cal R}_1,
\]
\[
\A_1=\{e\in {\cal M}\cap {\cal B}: I(e,{\cal R}_1) \ge \delta r\llll ^2/(2n)\}
\]
and $\A_2=(\m\cap \B)\sm \A_1$; thus
$e\in \A_2 $ implies $ I(e,{\cal R}_2) > \delta r\llll ^2/(2n).$

We then want to bound $|\A_1|$ and $|\A_2|$, the first in general, the second for a suitable $\sss$.
In each case we consider
\[
N_i =|\{(e,f)\in \A_i\times \R_i: e\cap f\neq \0\}|.
\]
For $i=1$, we have
$
|\A _1|\delta r\llll ^2/(2n)\leq N_1< |\R_1|\vartheta m \llll ^2/n,
$
implying
\beq{A_1 small}
|\A _1| \leq 2m\vartheta/\delta.
\enq
For $i=2$ we again have $N_2\geq |\A _2|\delta r\llll ^2/(2n)$,
but our upper bound now depends
on $\sss$.
Suppose $\sss$ is chosen uniformly from $\C{\m}{\gc m}$ (so
$\cal R$, $\cal B$, ${\cal R}_1$, and ${\cal R}_2$ are also random).
Set
$
\cee =\{f\in {\cal H} : I(f,{\cal M})\ge  \vartheta m\llll ^2/n\}
$
(the edges that will be in $\R_2$ if they are in $\R$).
Then
\beq{gG2}
\mbox{$\E N_2 ~< ~\sum_{f\in \cee}I(f,{\cal M})(1-\gamma)^{I(f,{\cal M})}
~<~
\sum_{f\in \cee} I(f,{\cal M})e^{-\gamma I(f,{\cal M})}.$}
\enq
Since $xe^{-\gamma x}$ is decreasing for $x\ge  1/\gc$
and---a very small point---we will choose parameters so
\beq{detail}
\vartheta m\llll ^2/n\geq 1/\gc,
\enq
the r.h.s.
of \eqref{gG2} is at most
$
t (\vt m\llll ^2/n)e^{-\gamma  \vartheta m\llll ^2/n}
$.
Thus each $\m $ admits some $\sss$
for which $N_2$ is at most this value, implying
\[   
|\A _2|\leq \tfrac{2\vartheta}{\delta}\, \tfrac{t}{r} e^{-\gamma  \vartheta m\llll ^2/n}m
\]   
and (recall \eqref{A_1 small})
\beq{mBbd}
|\m\cap\B| ~\le ~\tfrac{2\vt}{\gd}
(1+ \tfrac{t}{r}e^{-\gamma  \vartheta m\llll ^2/n})m ~=:~ s_r.
\enq
Thus the number of choices for $\m $ is at most
the number of ways to
choose $\sss$ and then an $\m\supseteq\sss$ satisfying \eqref{mBbd}.

\mn
\emph{Remark.}
Note we are not \emph{choosing} the $\R_i$'s and $\A_i$'s (which do depend on $\m $);
these are just used in establishing existence of the desired $\sss$.

\mn

For a given $\sss$ the number of choices for $\m\supseteq \sss$
satisfying \eqref{mBbd} is at most
\beq{psi}
\mbox{$\psi = \max_r
\min\left\{\Cc{r}{(1-\gamma)m},\
\sum_{\aaa\leq s_r}\C{t-\gc m}{\aaa}{r^* \choose m-\gamma m-a}\right\}$,}
\enq
where $r=|\R_\sss|$
(see \eqref{B large'} for $r^*$)
and the first bound is from \eqref{firstbd}.
We may thus bound
the number of $\m $'s by $\C{t}{\gc m}\psi$, and
the probability in \eqref{ProbM} by
\beq{finalprob}
\Cc{t}{ m}^{-1}\Cc{t}{\gc m}\psi.
\enq

\mn

Finally, we need to set parameters and discuss bounds.
Set $\gc=0.1$, $\gd = 100c^{-1}\log c$ and $\vt =0.1\gd$.
(Note these support \eqref{detail}.)
For $r<\gd t$ we use the first bound in \eqref{psi} to say the expression
in \eqref{finalprob} is at most
\[    
\Cc{t}{ m}^{-1}\Cc{t}{\gc m}\Cc{\gd t}{(1-\gc)m}
<
\Cc{m}{\gc m}\gd^{(1-\gc)m} ~<~ \exp[-((1-\gc)\log (1/\gd)-1)m].
\]    

For $r\geq \gd t$, referring to \eqref{mBbd}, we have $s:=s_r < 0.2[1+ 1/(\gd c)]m < 0.3m$.
So, using the second part of \eqref{psi}, we may bound the expression in \eqref{finalprob}
by
\[
s\Cc{t}{ m}^{-1}\Cc{t}{\gc m}
\Cc{t-\gc m}{s}\Cc{r^*}{m-\gamma m-s}
<\exp [- 0.5\log(1/\gd) m],
\]
where we used
\[
\Cc{t}{ m}^{-1}\Cc{t}{\gc m}
\Cc{t-\gc m}{s} =\Cc{m}{\gc m,s,m-\gc m -s}\Cc{t-\gc m -s}{m-\gc m -s}^{-1}
\]
and, say, $r^* < 2\gd(t-\gc m-s)$.
\end{proof}

\begin{proof}[Second proof of Theorem~\ref{TMP}.]

Here
it will be easier to consider
$e_1\dots e_m$ drawn
uniformly and \emph{independently} from $\h$ and prove bounds as in \eqref{ProbM}
for the probability that these $e_i$'s form a matching.
This is equivalent since
\[
\gz = \pr(\mbox{the $e_i$'s form a matching})/\pr(\mbox{the $e_i$'s are distinct})
\]
(recall $\gz =\pr(\mbox{$\m$ is a matching})$),
and the denominator (roughly $\exp[-\frac{m^2}{2t}]$) doesn't significantly
affect the bounds in \eqref{ProbM}.

\mn

Let $\h_0=\h$ and, for $j\geq 1$,
\[
\h_j=\{e\in \h:e\cap e_i=\0 ~ \forall i\leq j\}.
\]
(Thus $\h_0\supseteq \h_1\supseteq \cdots $ and the $e_i$'s form a matching iff
$e_i\in \h_{i-1}$ for all $\forall i\in [m]$.)
Set
\[
\gd =\left\{\begin{array}{ll}
e^{-1}&\mbox{if $c\leq e$,}\\
c^{-1}\log c &\mbox{otherwise}
\end{array}\right.
\]
(the precise values are not very important) and
\[
\cee_j=\{e\in \h_j: I(e,\h_j)<\gd |\h_j|l^2/n\},
\]
and let $\Q $ be the event
\[
\{|\{j\in [m/2]:e_j\in \cee_{j-1}\}|> m/3\}.
\]


\mn
Proposition~\ref{CSProp} gives
\beq{Csmall}
|\cee_j| < \gd |\h_j| + n/l,
\enq
so that
we always (regardless of history) have
\[
\pr(e_j\in \cee_{j-1})< \gd+n/(lt)=:\gd' <
\left\{\begin{array}{ll}
e^{-1}+o(1)&\mbox{if $c\leq e$,}\\
c^{-1}\log c +\min\{c^{-1},o(1)\}&\mbox{otherwise,}
\end{array}\right.
\]
and in either case $\gd'<1/2$.
Thus
$|\{j\in [m/2]:e_j\in \cee_{j-1}\}|$
is stochastically
dominated by a r.v.\ with the distribution ${\rm Bin}(m/2,\gd')$, and Theorem~\ref{thm:Chernoff} gives
\beq{Qbd}
\pr (\Q) < \exp [-\gO(\log (1/\gd'))m] .
\enq

\mn
(Of course $\gO(\log (1/\gd'))$ is just $\gO(1)$ until $c$ is a bit large.)
On the other hand, if $\Q$ does not occur then
\[
|\h_{m/2}| < (1-\gd l^2/n)^{m/6}t ,
\]
so the probability that we continue to a matching is less than
\beq{Cbd}
(1-\gd l^2/n)^{(m/6)\cdot (m/2)} <
\exp [-\gd cm/(12)].
\enq
The theorem follows.
\end{proof}

\section{Lower bound}\label{LB}

As noted earlier, Proposition~\ref{Plb} is an application of the following
celebrated result of Ajtai, Koml\'os and Szemer\'edi \cite{aks,sidon}.
\begin{thm}\label{TAKS}
There is a fixed $c>0$  such that $\ga(\gG) >c (N \log D)/D$
for any triangle-free graph $\gG$ with N vertices
and average degree at most D.
\end{thm}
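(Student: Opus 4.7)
The plan is to prove the theorem via the approach of Shearer, which sharpens the original Ajtai--Koml\'os--Szemer\'edi argument and gives the optimal constant. Define
\[
f(x) = \frac{x \log x - x + 1}{(x-1)^2}
\]
for $x\geq 0$, $x\neq 1$ (with $f(0)=1$, $f(1)=1/2$). This function is positive, decreasing, convex on $[0,\infty)$, and satisfies $f(x) \sim (\log x)/x$ as $x\to\infty$. The plan is to prove the per-vertex bound
\[
\alpha(\gG) \geq \sum_{v\in V(\gG)} f(d_\gG(v)), \tag{$*$}
\]
valid for every triangle-free graph $\gG$. Given ($*$), the theorem follows immediately: letting $D$ be the average degree and using convexity of $f$ via Jensen's inequality,
\[
\alpha(\gG) \geq N\cdot f\!\left(\frac{1}{N}\sum_v d_\gG(v)\right) \geq N f(D) = \gO(N\log D/D),
\]
up to handling the tiny regime where $D$ is bounded (where the standard Caro--Wei bound $\alpha(\gG)\geq N/(D+1)$ suffices).

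To establish ($*$) I would proceed by induction on $N=|V(\gG)|$. Pick a vertex $v$ with neighborhood $W=N(v)$; the crucial use of triangle-freeness is that $W$ is itself an independent set, and moreover every vertex of $W$ has its remaining neighbors (outside $\{v\}$) disjoint from $W$. The inductive step branches on two deletions: either delete $v$ alone and apply the hypothesis to $\gG-v$, gaining $\sum_{u\sim v} [f(d(u)-1) - f(d(u))]$, or delete $N[v]$ and gain $1$ plus the hypothesis on $\gG-N[v]$. The right way to balance these is to randomize: include $v$ with a cleverly chosen probability $p_v$ (depending on $d(v)$) and in expectation compare the two options. Carrying this through reduces ($*$) to checking an inequality of the form
\[
f(d) \geq p + (1-p)\,\E\!\left[\mbox{boundary terms involving } f(d-1), f(d'-1)\right]
\]
for the optimal $p=p(d)$, which is an elementary calculus exercise once the right ansatz for $p$ is in hand.

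The main obstacle, as I see it, is finding the correct inductive ansatz: guessing that $f$ is the right function and, correspondingly, the right probabilities $p_v$. Once that guess is in place, the verification is a (somewhat delicate) convexity computation, and triangle-freeness is used exactly to ensure that neighbors of $v$ being independent of each other lets one treat the branches as genuinely disjoint contributions. An alternative, slightly looser route that bypasses the exact function $f$ is the \emph{semi-random} (nibble) approach: in rounds, select a random sparse subset $S$ with $p=\Theta(1/D)$, delete one endpoint from each edge of $\gG[S]$, and iterate; triangle-freeness controls the codegrees so the residual graph stays essentially as sparse as it should be, and after $\Theta(\log D)$ rounds one accumulates an independent set of size $\gO((N\log D)/D)$. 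This weaker route would still deliver the statement of the theorem (with a worse constant) and may be easier to write down cleanly than the Shearer induction.
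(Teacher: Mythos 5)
The first thing to say is that the paper does not prove this statement at all: Theorem~\ref{TAKS} is quoted as a black box from Ajtai--Koml\'os--Szemer\'edi \cite{aks,sidon} and is only \emph{used} (in Section~\ref{LB}) to derive Proposition~\ref{Plb}. So there is no internal proof to compare with; your attempt is really measured against the literature, and for the paper's purposes a citation (to AKS, or to Shearer's sharpening) is all that is required.

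Judged on its own terms, your proposal is a plan rather than a proof, and the gap is exactly at the heart of the matter. The route via Shearer's bound is legitimate and would give the theorem (indeed with the best known constant), but the step you defer --- choosing the probabilities $p_v$ and verifying the inductive inequality that makes the two-branch (delete $v$ versus delete $N[v]$) argument close --- \emph{is} the content of the theorem; ``an elementary calculus exercise once the right ansatz is in hand'' is precisely what is not supplied. There is also a technical overreach in the target: the per-vertex inequality $\ga(\gG)\geq\sum_v f(d(v))$ with the continuous function $f(x)=(x\log x-x+1)/(x-1)^2$ is stronger than what Shearer's original induction yields. His 1983 argument works directly with the \emph{average} degree (pick a uniformly random vertex, delete its closed neighborhood, use triangle-freeness to control the lost edges, and exploit convexity of $f$ together with the differential inequality it satisfies); the degree-by-degree refinement is a later and more delicate result, proved with a different function defined by a recurrence, and it is not clear that the simple randomized induction you sketch verifies $(*)$ as stated. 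Since only the average-degree form is needed here, aiming at that weaker statement would remove the difficulty. The alternative ``nibble'' route you mention is also only an outline (the codegree control and the accumulation over $\Theta(\log D)$ rounds need genuine estimates), so as written neither branch of the proposal constitutes a complete argument.
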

\nin
(As usual $\ga$ is independence number).
The reduction to Theorem~\ref{TAKS} is quite routine and we will not
give the full blow-by-blow.

\mn

In what follows we set $M=f(k) $ ($=\C{n}{k}2^{-\C{k}{2}}$).
We want existence of a large independent set in the graph $\gG$ whose vertices are the
$k$-cliques of $G$ ($=G_{n,1/2}$) and edges the pairs that share edges (of $G$).
A standard application of the second moment method (e.g.\ \cite[Sec.\ 4.5 and Cor.\ 4.3.5]{AS}
gives $|V(\gG)| > (1-o(1))M$ w.h.p.\ (meaning, as usual, with probability tending to 1
as $n\ra\infty$), and routine analysis shows that the expected numbers of edges and triangles
in $\gG$ are respectively at most $k^4M^2/(2n^2)$ and $2k^6M^3/(3n^4)$.
(The main contributions are from edges consisting of pairs of cliques with just one common edge
and triangles composed of three cliques sharing the same edge.)
So Markov's Inequality says that with probability at least $1/6-o(1)$, we have
\beq{Gamma}
|V(\gG)| \sim M, ~~~ |E(\gG)| < k^4M^2/n^2 ~~\text{and} ~~ |T(\gG)|< 2k^6M^3/n^4,
\enq
where $T$ denotes number of triangles.  Thus Proposition~\ref{Plb}
will follow from the next assertion.

\mn
\emph{Claim.}
If \eqref{Gamma} holds then $\ga(\gG) > \gO(k^{-4}n^2 \log k)$.

\mn
To see this let $\gd = n^2/(2k^3M)$ and consider the subgraph $H$ of $\gG$ induced by
$W$ chosen uniformly from the subsets of $V(\gG)$ of size $\gd M \sim n^2/(2k^3)$.
Then $\E|E(H)| < \gd^2|E(\gG)|$ and $\E|T(H)| < \gd^3|T(\gG)|$, so (again using Markov)
there is a choice of $W$ for which
$|E(H)| \leq n^2/k^2$ and $|T(H)| \leq n^2/(3k^3) $.
We may then find some triangle-free $K\sub H$ on $(1/3-o(1))n^2/k^3$ vertices with average
degree at most $\frac{2n^2/k^2}{|V(K)|} =O(k)$, and applying Theorem~\ref{TAKS} gives the claim.\qed

\end{document}